\newtheorem{theorem}{Theorem}[section]
\newtheorem{corollary}[theorem]{Corollary}
\newtheorem{lemma}[theorem]{Lemma}
\newtheorem{proposition}[theorem]{Proposition}
\theoremstyle{definition}
\theoremstyle{remark}
\newtheorem{remark}[theorem]{Remark}
\numberwithin{equation}{section}
\begin{document}

\title[Schwarz lemma at the boundary of strongly pseudoconvex domains]
{Boundary Schwarz lemma for holomorphic self-mappings of strongly pseudoconvex domains}

\author[X. P. Wang and G. B. Ren]{Xieping Wang and Guangbin Ren}
\address{Department of Mathematics, University of Science and
Technology of China, Hefei 230026, China}

\email{pwx$\symbol{64}$mail.ustc.edu.cn; rengb$\symbol{64}$ustc.edu.cn}

\thanks{The second author is supported by the NNSF  of China (11371337), RFDP (20123402110068).}

\keywords{Holomorphic mappings; boundary Schwarz lemma; Carath\'{e}odory metric; strongly pseudoconvex domains; Graham's estimate.}
\subjclass[2010]{Primary 32A10, 32H02; Secondary 30C80, 32A40.}

\begin{abstract}
In this paper, we generalize a recent work of Liu et al.  from the open unit ball $\mathbb B^n\subset\mathbb C^n$
to more general bounded strongly pseudoconvex domains with $C^2$ boundary.
It turns out that part of the main result in this paper is in some certain sense just a part of results in a work of  Bracci and Zaitsev. However, the proofs are significantly different: the argument in this paper  involves a simple growth estimate for the Carath\'{e}odory metric near the boundary of $C^2$ domains and the well-known Graham's estimate  on the boundary behavior of the Carath\'{e}odory metric on strongly pseudoconvex domains, while Bracci and Zaitsev use other arguments.

\end{abstract}

\maketitle

\section{Introduction}

The Schwarz lemma as one of the most influential  results in  complex analysis  puts a great push to the development of several research fields,   such as  geometric function theory, hyperbolic geometry,  complex dynamical systems,  composition operators theory, and theory of quasi-conformal mappings. We refer to \cite{Abate, EJLS} for  a more complete insight on the Schwarz lemma.

As a variant of the classical interior Schwarz lemma,  a new  boundary Schwarz lemma in one complex variable was first independently established by Unkelbach \cite{Unkelbach} and Herzig \cite{Herzig}, and was rediscovered by Osserman \cite{Osserman} more than sixty years later. It turns out to be a powerful tool for solving a variety of problems in complex analysis. Recently, in  \cite{Bracci2}, it together with Lempert's complex geodesic theory was used to study the homogeneous complex Monge-Amp\`{e}re equation with a simple singularity at the boundary of strongly convex domains in $\mathbb C^n$. It was proved that for every bounded strongly convex domain with smooth boundary, there exists a smooth solution to this equation, called the pluricomplex Poisson kernel, of which the sub-level sets being exactly Abate's horopsheres with centers at the boundary. Bracci and Patrizio have succeeded in characterizing biholomorphisms between two bounded strongly convex domains by means of these pluricomplex Poisson kernels; see \cite{Bracci2, Bracci3} for more details.

In complex analysis of several variables, there are also a variety of Schwarz lemmas.
In \cite{Wu}, Wu proved what is now called the Carath\'{e}odory-Cartan-Kaup-Wu theorem,
which generalizes the classical Schwarz lemma for holomorphic functions to
higher dimensions. In \cite{BK}, Burns and Krantz obtained a new Schwarz lemma at the
boundary of strongly pseudoconvex domains, which gives a new rigidity result for holomorphic mappings. In \cite{Huang}, Huang
further strengthened the Burns-Krantz result in the case of strongly convex domains
for holomorphic mappings with an interior fixed point, and
see \cite{BZZ} for the other generalizations of the Burns-Krantz result. Very recently, a new boundary Schwarz lemma was proved by Liu et al. in \cite{LWX} for holomorphic self-mappings of the open unit ball $\mathbb B^n\subseteq\mathbb C^n$, and is essentially a direct consequence of Rudin's generalization of the classical Julia-Wolff-Carath\'{e}odory theorem, see also \cite{Wang-Ren} for a simpler proof and a stronger version. As applications,  it was used to give a new and simple proof of the distortion theorem of determinants for biholomorphic convex mappings on $\mathbb B^n$ and to establish partly the distortion theorem of determinants for biholomorphic starlike mappings in \cite{LWX}. Incidentally, there is also a new boundary Schwarz lemma for slice regular functions, see \cite{WR} for more details.

In this paper, we generalize the recent work of Liu et al. \cite{LWX} from the open unit ball $\mathbb B^n$ to more general bounded strongly pseudoconvex domains with $C^2$ boundary. After the authors finished a preliminary  version of this paper, Bracci brought the paper \cite{Bracci4} into their attention. Part of the main result of this paper is only the ``only if" part of \cite[Propostion 1.1]{Bracci4} in the case where the considered function is a holomorphic self-mapping, with a prescribed regular boundary fixed point, of a bounded strongly pseudoconvex domain $\Omega\subset \subset\mathbb C^n$ with $C^\infty$ boundary replaced by $C^2$ boundary. However, the proofs are significantly different: the argument in this paper  involves a simple growth estimate for the Carath\'{e}odory metric near the boundary of $C^2$ domains and the well-known Graham's estimate  on the boundary behavior of the Carath\'{e}odory metric on strongly pseudoconvex domains, while Bracci and Zaitsev use other arguments. In order to describe precisely what we will prove, we first recall a few standard  notations and necessary terminologies.

Let $\mathbb C^n$ denote the $n$-dimensional complex Hilbert space endowed with the standard Hermitian inner product given by
\begin{equation}\label{Hermitian product}
\langle z, w\rangle=\sum\limits_{j=1}^nz_j\overline{w}_j
\end{equation}
for vectors $z=(z_1, z_2, \ldots,  z_n)^t$, $w=(w_1, w_2, \ldots,  w_n)^t\in\mathbb C^n$. Here the symbol $^t$ stands for the transpose of vectors or matrices. We also denote by $\mathbb B^n$ the open unit ball of $\mathbb C^n$, i.e.
\begin{equation*}
\mathbb B^n:=\big\{z\in\mathbb C^n: |z|<1\big\}.
\end{equation*}
Throughout this paper, we write a point $z\in\mathbb C^n $ as a column vector, i.e. a $n\times1$ matrix.
  Let $\Omega$ be a domain in $\mathbb C^n$. For each holomorphic mapping $f=(f_1, f_2,\ldots,f_n)^t$ from $\Omega$ to $\mathbb C^n $, the derivative of $f$ at a point $z\in \Omega$ is the complex Jacobian matrix of $f$ given by
\begin{equation*}
J_f(z)=\bigg(\frac{\partial f_j}{\partial z_k}(z)\bigg)_{n\times n}.
\end{equation*}

Now the main result in this paper can be stated as follows:
\begin{theorem}\label{main result-Schwarz}
Let $\Omega\subset \mathbb C^n$ be a bounded strongly pseudoconvex domain with $C^2$ boundary and $f:\Omega\rightarrow \Omega$  a holomorphic mapping. Suppose that $f$ extends smoothly past some point $p\in\partial \Omega$ and $f(p)=p$. Then for the eigenvalues $\lambda, \mu_2, \ldots, \mu_n$ $($counted with multiplicities$)$ of $J_f(p)$, the following statements hold:
\begin{enumerate}
  \item [(i)] $\lambda $ is positive and is also an eigenvalue of $\overline{J_f(p)}^t$ such that
  $$\overline{J_f(p)}^t\nu_p=\lambda\nu_p,$$
where $\nu_p$ is the unit outward normal vector to $\partial \Omega$ at the point $p$;
  \item [(ii)] $\mu_j\in\mathbb C$ and $|\mu_j|\leq \sqrt{\lambda}$ for  $j=2, 3,\ldots, n$;
  \item [(iii)] For  $j=2, 3,\ldots, n$, there exists $\tau_j\in T^{(1,0)}_p(\partial \Omega)\cap \partial \mathbb B^n$ such that
      $$J_f(p)\tau_j=\mu_j\tau_j;$$
  \item [(iv)] $|\det J_f(p)|\leq\lambda^{\frac{n+1}2},\qquad
|{\rm{tr}} J_f(p)|\leq\lambda+(n-1)\sqrt{\lambda}$.
\end{enumerate}
Moreover, if $f$ has an interior fixed point $z_0\in \Omega$, then $\lambda\geq 1.$
\end{theorem}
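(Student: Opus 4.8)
The plan is to run everything off the distance-decreasing property of the infinitesimal Carath\'eodory--Reiffen metric $C_\Omega$ under the self-map $f$, namely $C_\Omega(f(z);J_f(z)v)\le C_\Omega(z;v)$, tested against Graham's boundary estimate: as $z\to p$ non-tangentially one has $C_\Omega(z;\nu_p)\sim\frac{1}{2\,d(z)}$ in the complex normal direction and $C_\Omega(z;\tau)\sim c\,\frac{\sqrt{L_p(\tau,\tau)}}{\sqrt{d(z)}}$ in a complex tangential direction $\tau$, where $d(z)=\mathrm{dist}(z,\partial\Omega)$, $L_p$ is the Levi form at $p$ (positive definite by strong pseudoconvexity), and $c$ is a universal constant that will cancel in every ratio below. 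I would carry out the whole argument along the inner normal segment $z_t:=p-t\nu_p$, for which $d(z_t)=t+O(t^2)$; since $f$ is smooth at $p$ with $f(p)=p$, a first-order expansion gives $f(z_t)=p-t\,J_f(p)\nu_p+o(t)$.

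First I would prove (i). Decompose $J_f(p)\nu_p=a\nu_p+w$ into normal and tangential parts, $a=\langle J_f(p)\nu_p,\nu_p\rangle$ and $w\perp\nu_p$. A $C^2$ expansion of the distance function gives $d(f(z_t))=t\,\mathrm{Re}\,a+o(t)$, so $f(z_t)\in\Omega$ already forces $\mathrm{Re}\,a\ge0$, and the approach of $f(z_t)$ to $p$ is asymptotically normal. Testing the distance-decreasing inequality with $v=\nu_p$ and invoking Graham's normal estimate on both sides gives $\frac{|a|}{2t\,\mathrm{Re}\,a}\le\frac{1}{2t}(1+o(1))$, i.e.\ $|a|\le\mathrm{Re}\,a$; since always $|a|\ge\mathrm{Re}\,a$, this forces $a$ to be a positive real number $\lambda$. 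To obtain invariance of the complex tangent space, I would test the same inequality against a tangential $\tau$: were the normal component $b:=\langle J_f(p)\tau,\nu_p\rangle$ nonzero, the left-hand side would blow up like $\frac{|b|}{2\lambda t}\sim t^{-1}$ while the right-hand side, governed by the tangential estimate, is only of order $t^{-1/2}$, a contradiction. Hence $\langle J_f(p)\tau,\nu_p\rangle=0$ for every tangential $\tau$, which together with $\langle J_f(p)\nu_p,\nu_p\rangle=\lambda$ is exactly $\overline{J_f(p)}^t\nu_p=\lambda\nu_p$; as $\lambda$ is real it is also an eigenvalue of $J_f(p)$.

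Parts (ii) and (iii) then follow quickly. The invariance just proved lets $J_f(p)$ restrict to the $(n-1)$-dimensional space $T_p^{(1,0)}(\partial\Omega)$, whose eigenvalues are $\mu_2,\dots,\mu_n$ and whose unit eigenvectors are the required $\tau_j\in T_p^{(1,0)}(\partial\Omega)\cap\partial\mathbb B^n$, giving (iii). For (ii) I would test distance-decreasing with $v=\tau_j$; since the tangent space is invariant we have $J_f(z_t)\tau_j=\mu_j\tau_j+O(t)$, asymptotically tangential, so Graham's tangential estimate applies on both sides with the same Levi factor $\sqrt{L_p(\tau_j,\tau_j)}>0$ and $d(f(z_t))\sim\lambda t$, yielding $\frac{|\mu_j|}{\sqrt\lambda}\le1$. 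Part (iv) is then purely algebraic: $\det J_f(p)=\lambda\prod_{j=2}^n\mu_j$ and $\mathrm{tr}\,J_f(p)=\lambda+\sum_{j=2}^n\mu_j$, so the two bounds follow from $|\mu_j|\le\sqrt\lambda$ together with the triangle inequality and $\lambda>0$.

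Finally, for the concluding statement I would bring the interior fixed point in through the \emph{integrated} Carath\'eodory distance $c_\Omega$, for which $c_\Omega(z_0,f(z))\le c_\Omega(z_0,z)$ since $f(z_0)=z_0$. Combining the pointwise inequality along $z_t$ with the boundary asymptotics $c_\Omega(z_0,z)=-\tfrac12\log d(z)+h(z)$ and the facts $d(z_t)=t+o(t)$, $d(f(z_t))=\lambda t+o(t)$, with both points approaching $p$ asymptotically along the normal, subtraction leaves $-\tfrac12\log\lambda\le h(z_t)-h(f(z_t))+o(1)$, and letting $t\to0$ gives $\lambda\ge1$. The main obstacle, and the one place where strong pseudoconvexity is used globally rather than infinitesimally, is the justification that $h(z)=c_\Omega(z_0,z)+\tfrac12\log d(z)$ has a genuine limit along the common (asymptotically normal) approach of $z_t$ and $f(z_t)$ to $p$; the bare leading-order estimate $c_\Omega(z_0,z)=-\tfrac12\log d(z)+O(1)$ only delivers a positive lower bound for $\lambda$, and it is exactly this horospherical refinement, obtained by integrating Graham's estimate along the normal, that upgrades the bound to the sharp $\lambda\ge1$. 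A secondary technical point recurring throughout is the passage to the limit inside Graham's estimate, which requires that $f(z_t)$ approach $p$ non-tangentially and that the estimate be uniform enough to replace $J_f(z_t)v$ by its limiting direction.
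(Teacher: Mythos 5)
Your treatment of parts (ii)--(iv) is essentially the paper's own argument: both rest on the contraction of the Carath\'eodory metric under $f$ combined with Graham's tangential estimate, with the ratio $\delta(f(z))/\delta(z)\to\lambda$ supplying exactly the factor that turns the inequality into $|\mu_j|^2\le\lambda$. Two points there are technical but real: you need a quantitative upper bound of the form $\mathcal{C}_{\Omega}(z,v)\le |v_N(z)|/\delta(z)+C|v_T(z)|/\delta(z)^{1/2}$ (Lemma~\ref{rough estimate} in the paper) to see that the $O(t)$ normal component of $J_f(z_t)\tau_j$ at the point $f(z_t)$ contributes only $O(1)$ to the metric and so does not disturb the tangential asymptotics of order $t^{-1/2}$; and the identification of $\lambda,\mu_2,\dots,\mu_n$ with the full spectrum of $J_f(p)$, which (iv) silently uses, deserves the short block-triangular computation of the paper's Step~4. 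Where you genuinely diverge is part (i): you extract both the invariance of $T_p^{(1,0)}(\partial\Omega)$ and the positivity of $\lambda$ from Graham's normal estimate, whereas the paper gets them elementarily from the maximum principle and the Hopf lemma applied to $\rho\circ f$ for a strictly plurisubharmonic defining function $\rho$. Your version has a gap: the displayed inequality $|a|/(2t\,\mathrm{Re}\,a)\le(1+o(1))/(2t)$ presupposes $\mathrm{Re}\,a>0$, and nothing in your argument excludes $a=\langle J_f(p)\nu_p,\nu_p\rangle=0$, in which case $d(f(z_t))=o(t)$, the curve $f(z_t)$ approaches $p$ tangentially, and the order comparison between the two sides of the metric inequality is no longer decisive. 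Some form of the Hopf lemma, or a Julia-type lower bound $\delta(f(z))\ge c\,\delta(z)$ obtained from the integrated distance, is needed to rule this out; the infinitesimal inequality alone does not.

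The more serious gap is the final claim $\lambda\ge1$. You reduce it to the existence of the limit of $h(z)=C_\Omega(z_0,z)+\tfrac12\log\delta(z)$ along the normal at $p$ (a horosphere-type refinement of the two-sided $O(1)$ estimate), and you concede yourself that the crude estimate only yields $\lambda>0$. That refinement is precisely the hard part: for bounded strongly convex domains with $C^3$ boundary it follows from Lempert's theory, but for a strongly pseudoconvex domain with only $C^2$ boundary it is not available off the shelf, so as written this step is missing rather than merely deferred. The paper avoids it entirely with a soft dynamical argument: if $\lambda<1$, then by (ii) every eigenvalue of $J_f(p)$ has modulus strictly less than one, some power $(J_f(p))^{k_0}$ is a contraction, orbits of points near $p$ under $f^{k_0}$ converge to $p$, and since $p$ is a peak point for the strongly pseudoconvex domain $\Omega$ the iterates $f^{kk_0}$ converge to $p$ locally uniformly, contradicting $f^{kk_0}(z_0)=z_0$. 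Alternatively, your distance argument can be salvaged without the refined asymptotics by iterating $f$: applying the two-sided $O(1)$ bounds to $k_\Omega(z_0,f^m(z_t))\le k_\Omega(z_0,z_t)$ together with $\delta(f^m(z_t))\sim\lambda^m t$ gives $-\tfrac{m}{2}\log\lambda\le C$ with $C$ independent of $m$, which forces $\lambda\ge1$.
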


Obviously, the Jacobian $J_f(p)$ of $f$ at the point $p\in\partial \Omega$  may be possibly degenerate. A relevant and very interesting result is a boundary version of the open mapping theorem for holomorphic mappings with a prescribed super-regular contact point between two strongly pseudoconvex domains in  $\mathbb C^n$, see \cite{Bracci01} for more details.

The starting point of the proof of the previous theorem is to show  that the real tangent space $T_{p}(\partial \Omega)$ and the complex tangent space $T_{p}^{(1,0)}(\partial \Omega)$ are respectively invariant under the action of $J_f(p)$ as linear transformations of $\mathbb R^{2n}$ and $\mathbb C^n$. This fact easily follows from  a careful consideration of the geometrical information of $f$ at its prescribed boundary fixed point $p\in\partial \Omega$ and the classical Hopf lemma from PDEs. However, a crucial difficulty arises in proving the estimate in the assertion ${\rm{(ii)}}$. The proof of  assertion ${\rm{(ii)}}$ will systematically use the geometric properties of the Carath\'{e}odory metric of bounded strongly pesudoconex domains. It is interesting to notice that the better geometric understanding given by this tool (and the impossibility of using the kind of explicit calculations done in \cite{LWX}) for the ball) yields a proof that is both simpler and clearer than the previous one of \cite[Theorem 3.1]{LWX}.

This paper is organized as follows. In Sect. \ref{Preliminaries}, we first collect some well-known facts on the intrinsic complex  geometry of domains in $\mathbb C^n$ by means of the intrinsic Carath\'{e}odory distance and metric. We then establish a simple growth estimate for the Carath\'{e}odory metric near the boundary of  bounded $C^2$  domains and recall the well-known Graham's estimate  on the boundary behavior of the Carath\'{e}odory metric on strongly pseudoconvex domains, which will play a fundamental role in our argument. Sect. \ref{Proof of main result} is devoted to a proof of  main result of this paper.

\section{Preliminaries}\label{Preliminaries}

In this section, we collect some well-known facts on the intrinsic complex  geometry of domains in $\mathbb C^n$, which are conveniently described using the intrinsic Carath\'{e}odory (pseudo)distance and (pseudo)metric. We refer to \cite{Abate,J-Pflug,Kobayashi} for details and much more on the Carath\'{e}odory distance  and metric on complex manifolds; here we shall just recall what is needed for our purpose. Let $\omega$ denote the \textit{Poincar\'{e} distance} on the open unit disk $\mathbb D\subset\mathbb C$. Let $\Omega\subset\mathbb C^n$ be a bounded domain, the $\textit{Carath\'{e}odory distance}$  $C_{\Omega}: \Omega\times \Omega\rightarrow \mathbb R^+$ of $\Omega$ is defined as
$$C_{\Omega}(z, w):=\sup\Big\{\omega\big(f(z), f(w)\big): f\in {\rm{Hol}} (\Omega, \mathbb D)\Big\}$$
for all $z, w\in \Omega$. It turns out that $C_{\Omega}(z, w)$ is always finite.
  Also, for each bounded domain $\Omega\subset \mathbb C^n$ with complex tangent bundle $T^{(1,0)}\Omega$, the \textit{Carath\'{e}odory metric}
$\mathcal{C}_{\Omega}: T^{(1,0)}\Omega\rightarrow\mathbb R^+$ is defined as
\begin{equation}\label{def-Cara}
\mathcal{C}_{\Omega}(z, v):=\sup\Big\{|df_z(v)|: f\in {\rm{Hol}} (\Omega, \mathbb D),\  f(z)=0\Big\}
\end{equation}
for all $z\in \Omega$ and $v\in T_z^{(1,0)}\Omega$; note that since $df_z(v)\in T_0^{(1,0)}\mathbb D$, $|df_z(v)|$ is the length in the Poincar\'{e} metric of $df_z(v)$ at $0$. For each  $f\in {\rm{Hol}} (\Omega, \mathbb D)$ and $z\in \Omega$, we denote by $\varphi_{f(z)}$ the unique holomorphic automorphism of $\mathbb D$ interchanging $0$ and $f(z)$. Then $g:=\varphi_{f(z)}\circ f\in {\rm{Hol}} (\Omega, \mathbb D)$ is such that $g(z)=0$ and
$$|dg_z(v)|=\big|d\varphi_{f(z)}\circ df_z(v)\big|=\frac{|df_z(v)|}{1-|f(z)|^2}\geq|df_z(v)|.$$
Thus the condition that $f(z)=0$ in the definition $(\ref{def-Cara})$ of Carath\'{e}odory metric is superfluous. It also should be remarked that $\mathcal{C}_{\Omega}$ is always locally Lipschitz, and hence continuous, but $C_{\Omega}$ is in general not the integrated form of $\mathcal{C}_{\Omega}$. Moreover, it is also easy to see that  for each $z\in \Omega$,  $\mathcal{C}_{\Omega}(z, v)$ is subadditive in $v\in T_z^{(1,0)}\Omega$. This simple fact will be used in the proof of Theorem $\ref{main result-Schwarz}$.

The main properties of the Carath\'{e}odory distance and metric are that they are contracted by holomorphic mappings: if $\Omega_1\subset\mathbb C^n$, $\Omega_2\subset\mathbb C^m$ are two bounded domains and $f:\Omega_1\rightarrow \Omega_2$ a holomorphic mapping, then
\begin{equation}\label{Caratheodory-Schwarz0}
C_{\Omega_2}\big(f(z), f(w)\big)\leq C_{\Omega_1}(z, w)
\end{equation}
and
\begin{equation}\label{Caratheodory-Schwarz}
\mathcal{C}_{\Omega_2}\big(f(z), df_z(v)\big)\leq \mathcal{C}_{\Omega_1}(z, v)
\end{equation}
for all $z, w\in \Omega_1$ and $v\in T_z^{(1,0)}\Omega_1$. In particular, biholomorphisms are isometries, and holomorphic self-mappings are $C_{\Omega}$ and $\mathcal{C}_{\Omega}$-nonexpansive.

In Sect. \ref{Proof of main result}, we shall use the following lemma concerning a simple growth estimate for the Carath\'{e}odory metric near the boundary of  bounded $C^2$  domains. To state precisely its content, we need some new notations.
Let $\Omega\subset \subset \mathbb C^n$ be a  domain with $C^2$ boundary and  $\delta(z)=d(z, \partial \Omega)$ the Euclidean distance from $z\in \mathbb C^n$   to the boundary  $\partial\Omega$. Then there exists an $\varepsilon>0$ such that $\partial \Omega$ admits a tubular neighbourhood $N_{\varepsilon}(\partial \Omega)$ of radius $\varepsilon$, i.e.  $N_{\varepsilon}(\partial \Omega):=\big\{z\in \mathbb C^n: \delta(z)<\varepsilon\big\}$, and  the \textit{signed distance function} $\delta^{\ast}:\mathbb C^n\rightarrow \mathbb R$ defined by
$$\delta^{\ast}(z)=
\left\{
\begin{array}{lll}
-\delta(z)\quad  \mbox{for} \quad z\in \Omega
\\
\ \; \,  \delta(z)\quad  \mbox{for} \quad z\in \mathbb C^n\setminus\Omega
\end{array}
\right.
$$
is  $C^2$-smooth on $N_{\varepsilon}(\partial \Omega)$ with gradient $\nabla \delta^{\ast}$ of length one  and hence is  a defining function for $\Omega$ (cf. \cite[pp. 70--71]{Fri-Gra}; also \cite{Krantz-Parks}). Moreover, for every
$z\in N_{\varepsilon}(\partial \Omega)$,
there is a \textit{unique} boundary point $\pi(z)\in\partial \Omega$ at minimum distance from $z$, and $z\mapsto \pi(z)$ is $C^1$-smooth on $N_{\varepsilon}(\partial \Omega)$. Indeed, $\pi$ is explicitly given by $$\pi(z)=z-\delta^{\ast}(z)\nabla \delta^{\ast}(z),\qquad \forall\, z\in N_{\varepsilon}(\partial \Omega),$$
 and is called \textit{the projection along the outward normal direction}, in view of the fact that $$\nabla \delta^{\ast}(z)=\nabla \delta^{\ast}\circ\pi(z)$$
  for all $z\in N_{\varepsilon}(\partial \Omega)$ (cf.\cite[Lemma 2.1]{Balogh}). For each $z\in N_{\varepsilon}(\partial \Omega)$ and each $v\in T_z^{(1,0)}\Omega\cong \mathbb C^n$, we can decompose $v$ into $v=v_N(z)+v_T(z)$, where $$v_N(z)=\langle v, \nabla \delta^{\ast}(z)\rangle \nabla \delta^{\ast}(z)$$
   is the normal component of $v$ at $z$ and $v_T(z)=v-v_N(z)$ the tangential component of $v$ at $z$, and $\langle\, , \,\rangle$ denotes the standard Hermitian inner product on $\mathbb C^n$ given by (\ref{Hermitian product}). The following lemma was first proved in \cite[Lemma 1.2]{Abate12} for the Kobayashi metric, and the same argument shows that it also holds for the Carath\'{e}odory metric. Here we provide the detailed proof for the sake of completeness.
\begin{lemma}\label{rough estimate}
Let $\Omega\subset \mathbb C^n$ be a bounded domain with $C^2$ boundary. Then there exist  $\varepsilon>0$ and  $C=C(\varepsilon)>0$ such that
\begin{equation}\label{upper estimate}
 \mathcal{C}_{\Omega}(z, v)\leq\frac{|v_N(z)|}{\delta(z)}+C\frac{|v_T(z)|}{\delta(z)^{1/2}}
\end{equation}
for all $z\in \Omega$ with $\delta(z)<\varepsilon$ and $v\in\mathbb C^n$.
\end{lemma}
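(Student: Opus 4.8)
The plan is to compare $\Omega$ near its boundary with an interior ball and to exploit two facts: the monotonicity of the Carath\'{e}odory metric under inclusions, and the classical explicit value of $\mathcal{C}_{\mathbb B^n}$. First I would record the uniform interior ball condition for $C^2$ domains, which is exactly the geometric content of the tubular neighbourhood and the unique nearest-point projection $\pi$ recalled above (cf. the cited \cite{Fri-Gra,Krantz-Parks,Balogh}): there is $r>0$ such that for every $p\in\partial\Omega$ the open ball $B_p:=B\big(p-r\,\nu_p,\,r\big)$ is contained in $\Omega$ and meets $\partial\Omega$ only at $p$. Fixing $z\in\Omega$ with $\delta(z)<\varepsilon:=r/4$ and setting $p=\pi(z)$ and $\nu_p=\nabla\delta^{\ast}(z)$, one has $z=p-\delta(z)\nu_p=c+(r-\delta(z))\nu_p$, where $c=p-r\nu_p$ is the centre of $B_p$; thus $z\in B_p$ and the point of $\partial B_p$ nearest to $z$ is again $p$, at distance $\delta(z)$.

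Next I would invoke monotonicity. Since $B_p\subseteq\Omega$, every $f\in{\rm Hol}(\Omega,\mathbb D)$ restricts to an element of ${\rm Hol}(B_p,\mathbb D)$ with the same differential at $z$, so that $\mathcal{C}_{\Omega}(z,v)\le\mathcal{C}_{B_p}(z,v)$ for all $v$. The affine map $w\mapsto(w-c)/r$ is a biholomorphism of $B_p$ onto $\mathbb B^n$, hence a Carath\'{e}odory isometry, and on $\mathbb B^n$ the metric is given by the classical formula
\[
\mathcal{C}_{\mathbb B^n}(\zeta,u)=\frac{\sqrt{(1-|\zeta|^2)\,|u|^2+|\langle\zeta,u\rangle|^2}}{1-|\zeta|^2}.
\]
This reduces the problem to evaluating the right-hand side at $\zeta=(z-c)/r=(1-\delta(z)/r)\,\nu_p$ and $u=v/r$.

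The remaining computation is explicit. Writing $\delta=\delta(z)$, one has $|\zeta|=1-\delta/r$, hence $1-|\zeta|^2=\frac{\delta}{r}\big(2-\frac{\delta}{r}\big)$, while the decomposition $v=v_N(z)+v_T(z)$ relative to $\nu_p=\nabla\delta^{\ast}(z)$ gives $|\langle\zeta,u\rangle|=\frac1r(1-\delta/r)\,|v_N(z)|$ and $|u|^2=\big(|v_N(z)|^2+|v_T(z)|^2\big)/r^2$. Using $1<2-\delta/r<2$ for $0<\delta<r$, the second term of $\mathcal{C}_{\mathbb B^n}(\zeta,u)^2$ is at most $|v_N(z)|^2/(4\delta^2)$ and the first at most $\big(|v_N(z)|^2+|v_T(z)|^2\big)/(r\delta)$. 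Taking square roots and applying $\sqrt{a+b}\le\sqrt a+\sqrt b$ yields
\[
\mathcal{C}_{\Omega}(z,v)\le\frac{|v_N(z)|}{2\delta}+\frac{|v_N(z)|}{\sqrt{r\delta}}+\frac{|v_T(z)|}{\sqrt{r\delta}}.
\]
Since $\delta<\varepsilon=r/4$ forces $\sqrt{\delta/r}\le\tfrac12$, the middle term is bounded by $|v_N(z)|/(2\delta)$ and combines with the first to produce the clean coefficient $1$ on $|v_N(z)|/\delta$, while the last term equals $C\,|v_T(z)|/\delta^{1/2}$ with $C=r^{-1/2}$, giving the asserted inequality.

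The only genuinely delicate point is bookkeeping rather than ideas. One must pin down the interior-ball geometry so that the boundary point of $B_p$ nearest to $z$ coincides with $\pi(z)$ and the relevant normal is \emph{exactly} $\nabla\delta^{\ast}(z)$, and then track the constants carefully enough to recover the sharp coefficient $1$ in front of the normal term; it is precisely this requirement that forces the passage from the interior-ball radius $r$ to the smaller neighbourhood $\delta<r/4$. Everything else is the monotonicity of $\mathcal{C}_{\Omega}$ under the inclusion $B_p\subseteq\Omega$ together with the known metric of the ball.
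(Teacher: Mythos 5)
Your proof is correct and follows essentially the same route as the paper: compare $\mathcal{C}_{\Omega}$ with the metric of an internally tangent ball via monotonicity, then compute explicitly using the classical formula for $\mathcal{C}_{\mathbb B^n}$. The only difference is bookkeeping — the paper combines the two normal contributions into the exact identity $\frac{\varepsilon^2}{(\varepsilon+|z-z_0|)^2}\frac{|v_N(z)|^2}{\delta(z)^2}\leq\frac{|v_N(z)|^2}{\delta(z)^2}$ and so gets the coefficient $1$ without shrinking the neighbourhood, whereas you recover it by restricting to $\delta<r/4$; both yield the stated estimate with $C=\varepsilon^{-1/2}$ (resp.\ $r^{-1/2}$).
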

\begin{proof}
Let $\varepsilon>0$ be such that $\partial \Omega$ admits a tubular neighbourhood $N_{2\varepsilon}(\partial \Omega)$ of radius $2\varepsilon$. Then for every point $z\in  \Omega$ with $\delta(z)<\varepsilon$, there exists a $z_0\in \Omega$ such that $\delta(z_0)=\varepsilon$, $\pi(z_0)=\pi(z)$ and $\delta(z)=\varepsilon-|z-z_0|$. Thus the Euclidean ball $B$ of center $z_0$ and radius $\varepsilon$ is internally tangent to the boundary $\partial \Omega$ at $\pi(z_0)$. Comparing the Carath\'{e}odory metric on $\Omega$ and that on the ball $B$, and using the length-decreasing property yield that
$$\mathcal{C}_{\Omega}(z, v)\leq \mathcal{C}_B(z, v).$$
Therefore to finish the proof, it suffices to estimate the latter. Thanks to the explicit formula for  $\mathcal{C}_{\mathbb B^n}$, for every $v\in\mathbb C^n$, we have
\begin{equation*}\label{computation}
\begin{split}
\mathcal{C}^2_B(z, v)&=\frac{|v|^2}{\varepsilon^2-|z-z_0|^2}\
+\frac{\big|\langle v, z-z_0\rangle\big|^2}{\big(\varepsilon^2-|z-z_0|^2\big)^2} \\
&=\frac{\varepsilon^2}{\big(\varepsilon+|z-z_0|\big)^2}\frac{|v_N(z)|^2}{\delta^2(z)}
+\frac{|v_T(z)|^2}{(\varepsilon+|z-z_0|)\delta(z)}\\
&\leq \frac{|v_N(z)|^2}{\delta^2(z)}+\frac{|v_T(z)|^2}{\varepsilon\delta(z)}\\
&\leq \bigg(\frac{|v_N(z)|}{\delta(z)}+\varepsilon^{-1/2}\frac{|v_T(z)|}{\delta(z)^{1/2}}\bigg)^2,
\end{split}
\end{equation*}
and the desired estimate follows with $C(\varepsilon)=\varepsilon^{-1/2}$.
\end{proof}

In the proof of main result (Theorem \ref{main result-Schwarz}), we will also make use part of Graham's estimate  on the boundary behavior of the Carath\'{e}odory metric on strongly pseudoconvex domains. For the reader's convenience, we state here that what we need from \cite[Theorems 1 and 1']{Graham}:

\begin{lemma}[Graham]\label{Graham}
Let $\Omega\subset \subset \mathbb C^n$ be a strongly pseudoconvex domain with $C^2$ boundary and $\mathcal{L}_{\Omega}$ the Levi form of $\Omega$. Then
\begin{equation}\label{Graham estimate1}
\lim_{z\rightarrow p}\mathcal{C}^2_{\Omega}\big(z, \tau_p\big)\delta(z)
=\frac12\mathcal{L}_{\Omega}\big(p, \tau_p\big),\qquad \forall\, \tau_p\in T^{(1,0)}_{p}(\partial \Omega).
\end{equation}
Moreover,
\begin{equation}\label{Graham estimate2}
\lim_{z\rightarrow p}\mathcal{C}^2_{\Omega}\big(z, v_T(z)\big)\delta(z)
=\frac12\mathcal{L}_{\Omega}\big(p, v_T(p)\big),
\end{equation}
uniformly in $p\in\partial \Omega$ and $v\in\partial\mathbb B^n$.
\end{lemma}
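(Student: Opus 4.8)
The statement is Graham's, so the plan is to reconstruct its proof by comparing $\mathcal{C}_\Omega$ near $p$ with the Carath\'eodory metric of explicitly computable model domains that osculate $\partial\Omega$ to second order, and then squeezing. Since $\mathcal{C}_\Omega$ is a biholomorphic invariant and both limits are local in nature, the first step is to invoke the localization principle for the Carath\'eodory metric at a strongly pseudoconvex boundary point. For a neighbourhood $U$ of $p$ one has $\mathcal{C}_\Omega\le\mathcal{C}_{\Omega\cap U}$ for free, because $\Omega\cap U\subset\Omega$ and the Carath\'eodory metric is decreasing under inclusion; the reverse estimate $\mathcal{C}_\Omega(z,v)\ge(1-o(1))\,\mathcal{C}_{\Omega\cap U}(z,v)$ as $z\to p$ is the substantial half, and it is obtained by producing a holomorphic peak function at $p$ (strong pseudoconvexity furnishes a local holomorphic support function, which is globalised by a $\bar\partial$-correction) and using it to confine the extremal competition to $U$. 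This reduces both (\ref{Graham estimate1}) and (\ref{Graham estimate2}) to a purely local computation.

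Next I would put $\Omega$ in normal form near $p$: after a holomorphic change of coordinates carrying $p$ to the origin, the defining function reads $\rho(z)=\mathrm{Re}\,z_n+H(z',z')+o(|z|^2)$, where $z'=(z_1,\dots,z_{n-1})$ are complex-tangential, $z_n$ is the normal coordinate, and $H$ is a positive-definite Hermitian form with $H(\tau_p,\tau_p)=\mathcal{L}_\Omega(p,\tau_p)$ on the tangent space. For each small $\eta>0$ the truncated strongly convex quadrics $\Omega^\pm_\eta=\{\mathrm{Re}\,z_n+(1\mp\eta)H(z',z')<0\}\cap\{|z|<r_0\}$ sandwich $\Omega\cap U$ from inside and outside, so monotonicity traps $\mathcal{C}_{\Omega\cap U}$ between $\mathcal{C}_{\Omega^+_\eta}$ and $\mathcal{C}_{\Omega^-_\eta}$. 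On each model --- a rescaled Siegel/ellipsoidal piece --- the Carath\'eodory metric agrees with the Kobayashi metric by Lempert's theorem and is computed explicitly; extracting the tangential asymptotics yields $\lim_{z\to p}\mathcal{C}^2_{\Omega^\pm_\eta}(z,\tau_p)\,\delta(z)=\tfrac12(1\mp\eta)\,\mathcal{L}_\Omega(p,\tau_p)$, the scaling $\delta^{-1}$ rather than $\delta^{-2}$ reflecting that $\tau_p$ is tangential. Letting $\eta\to0$ squeezes both bounds to $\tfrac12\mathcal{L}_\Omega(p,\tau_p)$, which is (\ref{Graham estimate1}).

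For the uniform version (\ref{Graham estimate2}) I would rerun the same argument with every piece of data controlled uniformly in $p$: compactness of $\partial\Omega$ together with uniform strong pseudoconvexity makes the normalising coordinate change, the osculation radius $r_0$, the peak functions, and all error terms uniform over $p\in\partial\Omega$. The only additional point is that the vector is now the tangential component $v_T(z)$ of a fixed unit $v\in\partial\mathbb B^n$, which varies with $z$. Since $z\mapsto v_T(z)$ is continuous on the tubular neighbourhood (the projection $\pi$ and $\nabla\delta^\ast$ being $C^1$) and $v_T(z)\to v_T(p)$, while $\mathcal{C}_\Omega(z,\cdot)$ is subadditive and locally Lipschitz in its vector slot, the gap between $\mathcal{C}^2_\Omega(z,v_T(z))\,\delta(z)$ and $\mathcal{C}^2_\Omega(z,v_T(p))\,\delta(z)$ is absorbed into the $o(1)$, and continuity of $\mathcal{L}_\Omega$ matches the two limits.

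The main obstacle is the lower-bound half of the localization. Unlike the upper bound, which is immediate from monotonicity, forcing $\mathcal{C}_\Omega$ to be asymptotically as large as the local metric needs genuinely global bounded holomorphic functions on $\Omega$, built from the local Levi-polynomial support function through a H\"ormander $\bar\partial$-estimate and kept uniform in $p$. Pinning the exact constant $\tfrac12$ via the explicit model computation, and checking that the inner and outer bounds genuinely converge as $\eta\to0$ rather than merely bracketing the answer, is the remaining delicate point.
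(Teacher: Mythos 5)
The paper offers no internal proof of this lemma to compare against: it is stated explicitly as a quotation of \cite[Theorems 1 and 1']{Graham} (``we state here what we need''), and it is consumed as a black box in Step 3 of the proof of Theorem \ref{main result-Schwarz}. So your proposal has to be judged as a reconstruction of Graham's original argument, and as such it follows the correct classical route: the easy half of localization by monotonicity, the substantial half $\mathcal{C}_{\Omega}(z,v)\geq(1-o(1))\,\mathcal{C}_{\Omega\cap U}(z,v)$ via a global peak function built from the Levi polynomial and a $\bar\partial$-correction with H\"{o}rmander estimates, then two-sided second-order osculation by quadric models, a squeeze in $\eta$, and uniformity in $p\in\partial\Omega$ from compactness and uniform strong pseudoconvexity. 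This is indeed the architecture behind Graham's Theorems 1 and 1', whereas the authors of the present paper simply import the result; what their choice buys is brevity and a clean separation between the known boundary asymptotics and the new boundary Schwarz lemma, while your reconstruction makes visible exactly where strong pseudoconvexity and the constant $\tfrac12$ enter.

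Two points in your sketch are glossed over and should be flagged. First, the appeal to Lempert's theorem is both anachronistic (it postdates Graham's paper) and insufficient for your purpose: knowing that the Carath\'{e}odory and Kobayashi metrics coincide on the truncated convex pieces $\Omega^{\pm}_{\eta}$ does not make either one ``explicitly computable'' there. What is explicit is the metric of the \emph{untruncated} quadrics, which are biholomorphic to $\mathbb B^n$ by a Cayley-type transform. For the outer (lower) bound this suffices by monotonicity, since $\Omega\cap U$ sits inside the full outer quadric; but the inner (upper) bound forces you to truncate the inscribed model so that it stays inside $\Omega$, and passing from the truncated model's metric to that of the full quadric requires a second localization at the model's own boundary point --- your step 1, run on the model with the explicit peak function pulled back from the ball, supplies exactly this, but it must be said. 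Second, in the uniform statement (\ref{Graham estimate2}) the tangential component $v_T(p)$ may vanish or degenerate as $p$ and $v$ vary; the argument should be normalized by homogeneity, proving uniformity over \emph{unit} tangential vectors and then scaling by $|v_T(z)|^2$, since as written your continuity step compares quantities that can both tend to zero. With these repairs your proposal is a faithful and correct reconstruction of the cited proof.
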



\section{Proof of main result}\label{Proof of main result}

In this section, we give a proof of  main result of this paper. We shall consistently use $C$ to denote positive constants, which could be different in different appearances.
\begin{proof}[Proof of Theorem $\ref{main result-Schwarz}$]
The proof is divided into five steps in order to make it easier to follow.
\bigskip
  \item [\textsf{Step} 1.] First, we show that both the real tangent space $T_{p}(\partial \Omega)$ and the complex tangent space $T_{p}^{(1,0)}(\partial \Omega)$ are invariant under the action of $J_f(p)$ as linear transformations on $\mathbb R^{2n}$ and $\mathbb C^n$, respectively. Namely,
      \begin{equation}\label{real invariant}
       df_p\big(T_{p}(\partial \Omega)\big)\subseteq T_{p}(\partial \Omega)
      \end{equation}
      and
      \begin{equation}\label{complex invariant}
       df_p\big(T_{p}^{(1,0)}(\partial \Omega)\big)\subseteq T_{p}^{(1,0)}(\partial \Omega).
      \end{equation}
      To this end, it is enough to show that $(\ref{real invariant})$ holds, since then $(\ref{complex invariant})$ immediately follows from the very definition of complex tangent space $T_{p}^{(1,0)}(\partial \Omega)$ and from the fact that $df_p=\partial f_p$ is $\mathbb C$-linear.

      Let $\nu_p$ be the unit outward normal vector to $\partial \Omega$ at the point $p\in\partial \Omega$ and $\rho$ a $C^2$-defining function for $\Omega$ such that $\rho$ is strictly plurisubharmonic  on some neighborhood of $\overline{\Omega}$, which always exists as guaranteed by the strong pseudoconvexity of $\Omega$. For each unit tangent vector $\tau\in T_{p}(\partial \Omega)$, we can take a $C^1$-curve $\gamma$ such that
      $\gamma\big((-1,1)\setminus\{0\}\big)\subset \Omega$, $\gamma(0)=p$ and $\gamma'(0)=\tau\in T_{p}(\partial \Omega)$, then the function
      $(-1,1)\ni t\mapsto \rho\circ f\circ\gamma(t)$
      has its maximum at $t=0$ and therefore
      $$d\rho_p\big(df_p(\tau)\big)=\left.\bigg(\frac{d}{dt}\rho\circ f\circ\gamma(t)\bigg)\right|_{t=0}=0,$$
      which implies that $df_p(\tau)\in T_{p}(\partial \Omega)$, and the desired result immediately follows.

       Moreover, the function $\rho\circ f$ is a negative plurisubharmonic (and hence subharmonic) function on $\Omega$ attaining its maximum at the boundary point $p\in\partial \Omega$. Therefore, the directional derivative of $\rho\circ f$ along $\nu_p$ at the point $p$ satisfies that
      \begin{equation}\label{directional derivative}
      d\rho_p\big(df_p(\nu_p)\big)
      =\frac{\partial (\rho\circ f)}{\partial \nu_p}(p)>0,
      \end{equation}
      in virtue of the classical Hopf  lemma for subharmonic functions.

\bigskip
  \item [\textsf{Step} 2.]
      What we just proved in Step 1 is equivalent to that
       both the real normal space $\mathbb R\nu_p$ at $p$   and the complex one $\mathbb C\nu_p$ at $p$  are invariant under the action of $\overline{J_f(p)}^t$ as linear transformations of $\mathbb R^{2n}$ and $\mathbb C^n$, respectively.
      Therefore $\overline{J_f(p)}^t$ must admit a real number $\lambda\in\mathbb R$ such that
      \begin{equation*}\label{lambda-relation}
      \overline{J_f(p)}^t\nu_p=\lambda\nu_p,
      \end{equation*}
      which together with $(\ref{directional derivative})$ gives that
      \begin{equation}\label{lambda-positive}
      \lambda={\rm{Re}}\Big\langle\nu_p, \overline{J_f(p)}^t\nu_p\Big\rangle
      ={\rm{Re}}\Big\langle J_f(p)\nu_p, \nu_p\Big\rangle
      =d\rho_p\big(df_p(\nu_p)\big)>0.
      \end{equation}
      Obviously, $\lambda>0$ is also an eigenvalue of $J_f(p)$.
\bigskip
\item [\textsf{Step} 3.]
      The assertion ${\rm{(iii)}}$ easily follows from what we proved in Step 1. If we proved the assertion ${\rm{(ii)}}$, then the assertion ${\rm{(iv)}}$ immediately follows.

      To prove the assertion ${\rm{(ii)}}$, we will make the best of Lemmas \ref{rough estimate} and \ref{Graham}. For $j=2, 3,\ldots, n$, each complex number $\mu_j\in\mathbb C$ and each vector $\tau_j\in T_{p}^{(1,0)}(\partial \Omega)\cap\partial\mathbb B^n$ satisfying
       \begin{equation}\label{eigenvalue}
       J_f(p)\tau_j=\mu_j\tau_j,
       \end{equation}
       it follows from the Schwarz inequality (\ref{Caratheodory-Schwarz}) that
      $$\mathcal{C}_{\Omega}\big(f(z), J_f(z)\tau_j\big)\leq \mathcal{C}_{\Omega}(z, \tau_j),$$
      and hence
      \begin{equation}\label{Caratheodory-Schwarz-ineq}
      \mathcal{C}_{\Omega}^2\big(f(z), J_f(z)\tau_j\big)\delta\big(f(z)\big)\leq \mathcal{C}^2_{\Omega}(z, \tau_j)\delta(z)\frac{\delta\big(f(z)\big)}{\delta(z)},\qquad \forall\, z\in \Omega.
      \end{equation}
      Since $\partial \Omega$ is of class $C^2$,   we may assume that
      $\delta$ is $C^2$-smooth on $\Omega\cap N_{\varepsilon}(\partial \Omega)$, where $N_{\varepsilon}(\partial \Omega)$ is a tubular neighbourhood  of $\partial \Omega$ with  radius $\varepsilon>0$
      taken as in the paragraph before Lemma \ref{rough estimate}. We shall also use the notations fixed there. Now it is transparent that
     \begin{equation}\label{boundary limit}
      \lim_{z\rightarrow p}\frac{\delta\big(f(z)\big)}{\delta(z)}={\rm{Re}}\Big\langle J_f(p)\nu_p, \nu_p\Big\rangle=\lambda>0.
      \end{equation}

      We next turn to the estimate in assertion ${\rm{(ii)}}$. First of all, equality $(\ref{Graham estimate1})$ shows that
      \begin{equation}\label{tangential limit1}
       \lim_{z\rightarrow p}\mathcal{C}^2_{\Omega}(z, \tau_j)\delta(z)=\frac12\mathcal{L}_{\Omega}\big(p, \tau_j\big)>0.
      \end{equation}
      Obviously, it suffices to consider the $j$'s satisfying the condition that $\mu_j\neq 0$ in assertion ${\rm{(ii)}}$. For each such $j$, we claim that
      \begin{equation}\label{tangential limit2}
      \lim_{z\rightarrow p}\mathcal{C}_{\Omega}^2\big(f(z), J_f(z)\tau_j\big)\delta\big(f(z)\big)=\frac12\mathcal{L}_{\Omega}\big(p, J_f(p)\tau_j\big),
      \end{equation}
      from which and $(\ref{eigenvalue})$--$(\ref{tangential limit1})$ the estimate in assertion ${\rm{(ii)}}$ will follow.
      In virtue of the limit in $(\ref{Graham estimate2})$ and its uniform convergence, we obtain that
      \begin{equation}\label{tangential limit3}
      \begin{split}
       \lim_{z\rightarrow p}\mathcal{C}_{\Omega}^2\Big(f(z), \big(J_f(z)\tau_j\big)_T\big(f(z)\big)\Big)\delta\big(f(z)\big)
       =&\frac12\mathcal{L}_{\Omega}\Big(f(p), \big(J_f(p)\tau_j\big)_T\big(f(p)\big)\Big)\\
       = &\frac12\mathcal{L}_{\Omega}\big(p, J_f(p)\tau_j\big).
      \end{split}
      \end{equation}
      For each $z\in \Omega\cap N_{\varepsilon}(\partial \Omega)$, it follows from the subadditivity of   $\mathcal{C}_{\Omega}(z, v)$ in $v\in T_z^{(1,0)}\Omega$ that
      \begin{equation*}
      \begin{split}
      &\mathcal{C}_{\Omega}\Big(f(z), \big(J_f(z)\tau_j\big)_T\big(f(z)\big)\Big)
      -\mathcal{C}_{\Omega}\Big(f(z), \big(J_f(z)\tau_j\big)_N\big(f(z)\big)\Big)\\
      \leq &\,\mathcal{C}_{\Omega}\big(f(z), J_f(z)\tau_j\big)\\
      \leq &\,\mathcal{C}_{\Omega}\Big(f(z), \big(J_f(z)\tau_j\big)_T\big(f(z)\big)\Big)+\mathcal{C}_{\Omega}\Big(f(z), \big(J_f(z)\tau_j\big)_N\big(f(z)\big)\Big).
        \end{split}
      \end{equation*}
      Also from the strong pseudoconvexity of $\Omega$ and equality $(\ref{tangential limit3})$ it follows that
      $$\mathcal{C}_{\Omega}\Big(f(z), \big(J_f(z)\tau_j\big)_T\big(f(z)\big)\Big)$$ is non-vanishing whenever $z\in {\Omega}$ is sufficiently close to $p$.
      Now to prove equality $(\ref{tangential limit2})$, it suffices to show that
      \begin{equation}\label{quoient limit}
      \lim_{z\rightarrow p}\frac{\mathcal{C}_{\Omega}\Big(f(z), \big(J_f(z)\tau_j\big)_N\big(f(z)\big)\Big)}{\mathcal{C}_{\Omega}\Big(f(z), \big(J_f(z)\tau_j\big)_T\big(f(z)\big)\Big)}=0.
      \end{equation}
      Indeed, from inequality $(\ref{upper estimate})$, equalities $(\ref{boundary limit})$  and $(\ref{tangential limit3})$ we conclude easily that
      \begin{equation}\label{compare-ineq}
      \frac{\mathcal{C}_{\Omega}\Big(f(z), \big(J_f(z)\tau_j\big)_N\big(f(z)\big)\Big)}{\mathcal{C}_{\Omega}\Big(f(z), \big(J_f(z)\tau_j\big)_T\big(f(z)\big)\Big)}
      \leq C
      \frac{\big|\big(J_f(z)\tau_j\big)_N\big(f(z)\big)\big|}{\delta(z)^{1/2}}
      \leq C  \delta(z)^{1/2}
      \end{equation}
      for some constant $C>0$.
      In the last step we have used the obvious fact that $J_f(z)-J_f(p)=O(|z-p|)$ as $z\rightarrow p$. Now  $(\ref{quoient limit})$ follows and so does $(\ref{tangential limit2})$.

\bigskip
  \item [\textsf{Step} 4.] Now we claim that $\lambda, \mu_2, \ldots, \mu_n$ are all eigenvalues of the linear transformation $J_f(p): \mathbb C^n\rightarrow \mathbb C^n$.
      The argument is the same as that in \cite{LWX}. We provide the details for completeness. Assume that $\big\{v_2, v_3, \ldots, v_n\big\}$ is an orthonormal basis for $T_{p}^{(1,0)}(\partial {\Omega})$ with respect to the standard Hermitian inner product on $T_{p}^{(1,0)}(\partial {\Omega})\cong \mathbb C^{n-1}$, and hence $\big\{\nu_p, v_2, \ldots, v_n\big\}$ is an orthonormal basis for $\mathbb C^n$. Set
      $$U=\big(\nu_p, v_2, \ldots, v_n\big),$$
      then $U$ is a unitary matrix of order $n$. Since $\overline{J_f(p)}^t\nu_p=\lambda\nu_p,$ we obtain that
      $$\overline{J_f(p)}^tU=U\left(
                                \begin{array}{cc}
                                  \lambda & \overline{B}^t \\
                                  0& \overline{V}^t\\
                                \end{array}
                              \right),
      $$
      where $V$ is a complex matrix of order $(n-1)$, and $B$ is a $(n-1)\times 1$ complex matrix. The previous equality is equivalent to
      $$J_f(p)U=U\left(
      \begin{array}{cc}
      \lambda & 0\\
      B& V\\
      \end{array}
      \right).
      $$
      That is,
      $$J_f(p)\big(\nu_p, v_2, \ldots, v_n\big)
      =\big(\nu_p, v_2, \ldots, v_n\big)\left(
      \begin{array}{cc}
      \lambda & 0\\
      B& V\\
      \end{array}
      \right),$$
      which implies that
      \begin{equation}\label{V-transform}
      \left.J_f(p)\right|_{T_{p}^{(1,0)}(\partial \Omega)}\big(v_2, v_3, \ldots, v_n\big)
      =\big(v_2, v_3, \ldots, v_n\big)V,
      \end{equation}
      the roots of the characteristic polynomial $\det(tI_{n-1}-V)$ are exactly $\mu_2, \mu_3,\ldots, \mu_n$. If $\lambda\notin\big\{\mu_2, \mu_3,\ldots, \mu_n\big\}$, then $\lambda, \mu_2, \ldots, \mu_n$  are all eigenvalues of the linear transformation $J_f(p): \mathbb C^n\rightarrow \mathbb C^n$. Otherwise, we assume that $\lambda=\mu_{j_0}$, where $\mu_{j_0}$ is a zero of  $\det(tI_{n-1}-V)$ of order $k\in\mathbb N^{\ast}$, the obvious fact that the characteristic polynomial of $J_f(p)$ is
      $$\det\bigg(tI_n-\left(
              \begin{array}{cc}
                \lambda & 0 \\
                B & V \\
              \end{array}
            \right)\bigg)=(t-\lambda)\det(tI_{n-1}-V)
      $$
      shows that $\lambda=\mu_{j_0}$ is an eigenvalue of order $k+1$ of $J_f(p): \mathbb C^n\rightarrow \mathbb C^n$.

\bigskip
\item [\textsf{Step} 5.] We claim that if $f$ has an interior fixed point $z_0\in {\Omega}$, then $\lambda\geq 1.$ From Step 4, it is easy to see that $\lambda$ is also a singular value of $J_f(p)$. Unfortunately, it seems that we can not obtain an estimate on the upper bounds of  the other singular values of  $J_f(p)$ in terms of $\lambda$ as in \cite[Proposition 1.1]{Bracci4}.
    Suppose by contradiction that $\lambda<1$ under the extra condition that $z_0\in {\Omega}$ is an interior fixed point of $f$, and consider the sequence $\{f^k\}$ of iterates of $f$. From ${\rm{(ii)}}$, we know that all the eigenvalue values of $J_f(p)$ are strictly less than one. Taking into account the Jordan canonical form of $J_f(p)$, we deduce that there exists a sufficiently large $k_0\in\mathbb N$ such that $(J_f(p))^{k_0}:\mathbb C^n\rightarrow\mathbb C^n$ is a contraction. Therefore for every point $z\in \Omega$ sufficiently close to $p$, the sequence $\{f^{kk_0}(z)\}$ converges to $p$. Moreover, since $\Omega$ is strongly pseudoconvex, $p\in\partial \Omega$ is a peak point for $\Omega$, it follows from \cite[Propostion 2.3.59]{Abate} that the sequence $\{f^{kk_0}\}$ converges to $p$, contradicting the fact that $f^{kk_0}(z_0)$ equals to $z_0$ for all $k\in \mathbb{N}$. This completes the proof.
\end{proof}

Here we state the result obtained in Step. 5 above as the following independent proposition, since it seems to us that it will be useful in other similar situations.

\begin{proposition}
Let $f: \Omega\rightarrow \Omega$ be a holomorphic self-mapping of   strongly pseudoconvex domain $\Omega\subset \subset\mathbb C^n$ with $C^2$ boundary. If $f$ can extend smoothly to some point $p\in\partial \Omega$, and all the eigenvalue values of the Jacobian $J_f(p)$ of $f$ at $p$ are strictly less than one, then there is a number  $k_0\in\mathbb{N}$ such that the sequence $\{f^{kk_0}\}$ of iterates of $f^{k_0}$ converges locally uniformly to $p$.
\end{proposition}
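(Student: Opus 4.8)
The plan is to show that a suitable iterate $g := f^{k_0}$ is a genuine local contraction near $p$, and then to upgrade the resulting pointwise convergence near $p$ to locally uniform convergence on all of $\Omega$ by exploiting that $p$ is a peak point.

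First I would isolate the purely linear-algebraic input. Writing $A := J_f(p)$, the hypothesis says that the spectral radius of $A$ is strictly less than one. By the spectral radius formula $\rho(A) = \lim_k \|A^k\|^{1/k}$ (equivalently, by inspecting the Jordan canonical form exactly as in Step 4), one gets $\|A^k\| \to 0$, so there is a $k_0 \in \mathbb{N}$ with $\|A^{k_0}\| =: c < 1$. Since $p$ is a boundary fixed point, $f(p) = p$, the chain rule yields $J_{f^{k_0}}(p) = A^{k_0}$; thus $g := f^{k_0}$ is a holomorphic self-map of $\Omega$, smooth up to $p$, with $g(p) = p$ and operator norm $\|J_g(p)\| = c < 1$.

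Next I would promote this to a metric contraction in a neighborhood of $p$. As $g$ is $C^1$ up to $p$ with $g(p) = p$, a first-order Taylor expansion gives $g(z) - p = J_g(p)(z - p) + o(|z - p|)$, whence $|g(z) - p| \le c'\,|z - p|$ for all $z$ in a small ball $U$ centered at $p$, where $c'$ is any constant with $c < c' < 1$. Because $g$ maps $\Omega$ into $\Omega$ and strictly decreases the distance to $p$, the set $U \cap \Omega$ is $g$-invariant, and iterating gives $f^{kk_0}(z) = g^k(z) \to p$ for every $z \in U \cap \Omega$.

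The final---and genuinely global---step is the main obstacle: passing from convergence on the small set $U \cap \Omega$ to locally uniform convergence on all of $\Omega$. Here I would use that $\Omega$ is bounded, so $\{g^k\} \subset \mathrm{Hol}(\Omega, \Omega)$ is a normal family by Montel's theorem; hence every subsequence admits a further subsequence converging locally uniformly to some $h \in \mathrm{Hol}(\Omega, \overline{\Omega})$. Fixing a point $z_0 \in U \cap \Omega$, the previous step forces $h(z_0) = p \in \partial\Omega$. Since $\Omega$ is strongly pseudoconvex, $p$ is a peak point, and the associated peak function together with the maximum principle excludes any nonconstant holomorphic limit attaining a boundary value at an interior point, so $h \equiv p$. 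As every subsequential limit is the constant map $p$, the full sequence $\{f^{kk_0}\}$ converges locally uniformly to $p$. This rigidity is exactly the content of \cite[Proposition 2.3.59]{Abate}, which I would invoke to package the peak-function argument cleanly.
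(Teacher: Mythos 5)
Your proposal is correct and follows essentially the same route as the paper: a Jordan-form/spectral-radius argument yields an iterate $f^{k_0}$ whose differential at $p$ is a contraction, giving pointwise convergence to $p$ near $p$, and then the peak-point rigidity of Abate's Proposition 2.3.59 together with Montel's theorem upgrades this to locally uniform convergence of $\{f^{kk_0}\}$ on all of $\Omega$. You merely spell out the Taylor-expansion and subsequence details that the paper leaves implicit.
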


\begin{proof}
We have proved in Step. 5 above the existence of $k_0\in\mathbb{N}$ with the desired property. Since  $\Omega$ is bounded, the local uniformity of  this convergence follows easily from the classical Montel theorem, which implies that for each domain $D\subset \mathbb C^n$, the topology of pointwise convergence on $\textrm{Hol}(D, \Omega)$ coincides with the usual compact-open topology.
\end{proof}

Some useful remarks concerning Theorem \ref{main result-Schwarz} are in order.

\begin{remark}
In the preceding proof, we can also replace the Carath\'{e}odory metric by the Kobayashi metric. The choice of the Carath\'{e}odory metric is convenient to prove $(\ref{tangential limit3})$ using the fact that for each $z\in \Omega$,  $\mathcal{C}_{\Omega}(z, v)$ is subadditive in $v\in T_z^{(1,0)}\Omega$. Even though the
subadditivity property fails in general for the Kobayashi metric, equalities in $(\ref{tangential limit3})$ also hold with the Carath\'{e}odory metric replaced by the Kobayashi metric. This follows by checking carefully the origin proof of Graham's estimate in \cite{Graham}. Therefore, our argument also works for the Kobayashi metric.
\end{remark}

\begin{remark}
From our argument in Step. 4 above, we conclude that (as Bracci pointed out to the author) either there exists a vector $v\in \mathbb C^n\setminus T^{(1,0)}(\partial \Omega)$ such that $J_f(p)v=\lambda v$, or there exists $2\leq m\leq n$ such that $\mu_m=\lambda$ and the geometric multiplicity of the eigenvalue $\lambda$ is strictly less than its algebraic multiplicity. Moreover, the second case is possible if and only if $\lambda=1$.
\end{remark}

\begin{remark}
In the case that $\Omega$ is a bounded strongly convex domain with  $C^3$ boundary, the fact that $\lambda\geq 1$ provided that $f$ has an interior fixed point $z_0\in \Omega$ was already known. Indeed, from \cite[Theorem 2.6.47]{Abate} and its proof one can easily deduce that the eigenvalue $\lambda$ of $f_f(p)$ is no other than the \textit{boundary dilatation coefficient} $\alpha_f(p)$ of $f$ at the point $p\in\partial \Omega$, defined by means of
\begin{equation}\label{BD-Coefficient}
\frac12 \log\alpha_f(p)=\liminf_{z\rightarrow p}\Big(k_{\Omega}(w_0, z)-k_{\Omega}\big(w_0, f(z)\big)\Big),
\end{equation}
which is independent of the base point $w_0\in \Omega$ whenever $p$ is a regular boundary fixed point of $f$ (see \cite[Lemma 6.1]{Bracci0}; also \cite[Lemma 1.3]{Abate13}), which is the case under our assumption. Here $k_{\Omega}$ is the Kobayashi distance of $\Omega$. In particular, setting $w_0=z_0$ in the right-hand side of  $(\ref{BD-Coefficient})$ and noticing that $k_{\Omega}\big(z_0, f(z)\big)=k_{\Omega}\big(f(z_0), f(z)\big)\leq k_{\Omega}(z_0, z)$ yield that $\lambda=\alpha_f(p)\geq1$. Moreover, when $\lambda=1$, one can obtain more information about $f$, see \cite[Theorem 2.4]{Bracci-CD} for details.

\end{remark}

\begin{remark}

In some sense, Theorem \ref{main result-Schwarz} is related to Abate's generalization of the classical Julia-Wolff-Carath\'{e}odory theorem, see  \cite[Theorem 0.2]{Abate11} for details. The theory of Lempert's complex geodesics plays a central role in Abate's argument,  which requires the boundary of domain $\Omega$ to be of class $C^3$, while in our argument Graham's estimate of Carath\'{e}odory metric on strongly pseudoconvex domains is a fundamental tool and $C^2$-regularity of the boundary $\partial \Omega$ is enough.
\end{remark}

%

\begin{remark}\label{weak condition2}
We can weaken the assumption of regularity of $f$ at the point $p$, and $C^2$-regularity is enough at least in the previous argument. We can also consider the case that the boundary point $p\in\partial \Omega$ is only a regular boundary contact point of $f$ and even the case that $f:D\rightarrow\Omega$ is a holomorphic mapping between two different strongly pesudoconvex domains $D\subset\mathbb C^m$, $\Omega\subset\mathbb C^n$. The previous argument still works well. We have chosen to restrict to the case as in Theorem \ref{main result-Schwarz}  in order to emphasize the basic ideas.
\end{remark}

As a direct consequence of Theorem \ref{main result-Schwarz}, we have the following result (it is exactly \cite[Theorem 3.1]{LWX} except ${\rm{(vi)}}$ and  the first equality in ${\rm{(i)}}$):

\begin{corollary}\label{Schwarz in C^n}
Let $f:\mathbb B^n\rightarrow\mathbb B^n$  be a holomorphic mapping. If $f$ is holomorphic at $p\in\partial \mathbb B^n$ and $f(p)=p$, then for the eigenvalues $\lambda, \mu_2,\ldots,\mu_n$ $($counted with multiplicities$)$ of $J_f(p)$, the following statements hold:
 \begin{enumerate}
 \item [(i)] $\lambda=\dfrac{\partial |f|}{\partial p}\geq \dfrac{|1-\langle f(0), p\rangle|^2}{1-|f(0)|^2}>0$;

 \item [(ii)] $p$ is  an eigenvalue of $\overline{J_f(p)}^t$ with respect to $\lambda$, that is $\overline{J_f(p)}^t p=\lambda p$;

 \item [(iii)] $\mu_j\in\mathbb C$ and $|\mu_j|\leq\sqrt{\lambda}$ for $j=2,\ldots,n$;

 \item [(iv)] For any $\mu_j$, there exists $\tau_j\in\partial \mathbb B^n\cap T_{p}^{(1,0)}(\partial \mathbb B^n)$ such that
    \begin{equation*}
    J_f(p)\tau_j=\mu_j\tau_j,\qquad \forall\, j=2,\ldots,n;
    \end{equation*}
\item [(v)] $|\det J_f(p)|\leq\lambda^{\frac{n+1}2},\qquad
|{\rm{tr}} J_f(p)|\leq\lambda+(n-1)\sqrt{\lambda}$;
\item [(vi)]
\begin{equation}\label{Julia-ineq for Ball}
\frac{\partial |f|}{\partial p}f^{\ast}\Omega_{\mathbb B^n,\,p}\leq\Omega_{\mathbb B^n,\,p},
\end{equation}
    where
    $$\Omega_{\mathbb B^n,\,p}(z)=-\frac{1-|z|^2}{\big|1-\langle z, p\rangle\big|^2}$$
    is the $(negative)$ pluricomplex Poisson kernel of $\mathbb B^n$ with a simple singularity at $p$; and equality holds in $(\ref{Julia-ineq for Ball})$ if and only if $f\in Aut(\mathbb B^n)$ is such that $f(p)=p$.
 \end{enumerate}

 Moreover, the inequalities in ${\rm{(i)}}$,  ${\rm{(iii)}}$ and ${\rm{(v)}}$ are sharp.
\end{corollary}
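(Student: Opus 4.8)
The plan is to derive Corollary \ref{Schwarz in C^n} by specializing Theorem \ref{main result-Schwarz} to $\Omega=\mathbb B^n$ and then supplying the two genuinely new assertions, namely the first equality in (i) and statement (vi), via Julia's lemma for the ball. First I would record that for the unit ball the unit outward normal at a boundary point $p\in\partial\mathbb B^n$ is $\nu_p=p$ itself. With this single identification, assertions (ii), (iii), (iv) and (v) of the corollary are literal transcriptions of assertions (i), (ii), (iii) and (iv) of Theorem \ref{main result-Schwarz}, and the strict positivity $\lambda>0$ appearing in (i) is likewise immediately furnished by the theorem.

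Next I would establish the first equality in (i), that is $\lambda=\partial|f|/\partial p$. Since $f$ is holomorphic we have $df_p(p)=J_f(p)p$, so differentiating $|f|^2=\langle f,f\rangle$ along the real direction $p$ at the point $p$ and using $f(p)=p$ gives
$$\frac{\partial |f|}{\partial p}=\mathrm{Re}\big\langle J_f(p)p,\, f(p)\big\rangle=\mathrm{Re}\big\langle J_f(p)\nu_p,\, \nu_p\big\rangle=\lambda,$$
the last equality being \eqref{lambda-positive}. This is a short direct computation and poses no difficulty.

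For the remaining lower bound in (i) and for (vi) I would invoke Julia's lemma on $\mathbb B^n$: writing $\alpha_f(p)=\liminf_{z\to p}\big(1-|f(z)|\big)/\big(1-|z|\big)$ for the boundary dilatation coefficient, one has
$$\frac{|1-\langle f(z),p\rangle|^2}{1-|f(z)|^2}\leq \alpha_f(p)\,\frac{|1-\langle z,p\rangle|^2}{1-|z|^2},\qquad \forall\, z\in\mathbb B^n.$$
Because $f$ extends smoothly to $p$ with $f(p)=p$, evaluating the quotient $\big(1-|f(z)|\big)/\big(1-|z|\big)$ along the radial segment $z=rp$ and reusing the derivative computation of the preceding paragraph shows that it tends to $\lambda$, whence $\alpha_f(p)=\lambda$. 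Setting $z=0$ in Julia's inequality then yields the lower bound $\lambda\geq |1-\langle f(0),p\rangle|^2/(1-|f(0)|^2)>0$, while rewriting the very same inequality in terms of the kernel $\Omega_{\mathbb B^n,\,p}$ produces exactly $\lambda\, f^{\ast}\Omega_{\mathbb B^n,\,p}\leq \Omega_{\mathbb B^n,\,p}$, which is (vi); the equality clause, that equality forces $f\in\mathrm{Aut}(\mathbb B^n)$ with $f(p)=p$, comes from the rigidity part of Julia's lemma.

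Finally, for sharpness I would test explicit maps: the identity mapping of $\mathbb B^n$ gives $\lambda=1$ and $\mu_j=1$, turning the inequalities in (iii) ($|\mu_j|\leq\sqrt\lambda$) and in (v) ($|\det J_f(p)|\leq\lambda^{(n+1)/2}$ and $|\mathrm{tr}\,J_f(p)|\leq\lambda+(n-1)\sqrt\lambda$) into equalities, and automorphisms of $\mathbb B^n$ fixing $p$ realize equality in the lower bound of (i) through the equality case of Julia's lemma. I expect the only step that requires genuine care to be the passage through Julia's lemma, specifically the identification $\lambda=\alpha_f(p)$ via the smoothness of $f$ at $p$ and the extraction of the rigidity statement in (vi); everything else is a formal specialization of the main theorem together with one elementary differentiation.
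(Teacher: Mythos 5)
Your proposal is correct and follows essentially the same route as the paper: the paper likewise reduces everything to Theorem \ref{main result-Schwarz} with $\nu_p=p$, obtains the first equality in (i) from the normal-derivative identity $\lambda=\mathrm{Re}\langle J_f(p)p,p\rangle$ (equation \eqref{boundary limit}), and handles the lower bound in (i) and assertion (vi) via the classical Julia inequality together with its rigidity/equality case. The only difference is cosmetic: you recompute $\partial|f|/\partial p$ by direct differentiation and identify $\alpha_f(p)=\lambda$ explicitly, where the paper simply cites \eqref{boundary limit} and the references for Julia's lemma.
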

\begin{proof}
It suffices to prove inequality $(\ref{Julia-ineq for Ball})$ and  the first equality in ${\rm{(i)}}$. The latter follows directly from equality $(\ref{boundary limit})$. Inequality $(\ref{Julia-ineq for Ball})$ is nothing but the classical Julia inequality (see \cite[Theorem 2.2.21]{Abate}), which was first rephrased by Bracci et al. in \cite{Bracci3} as the form of $(\ref{Julia-ineq for Ball})$.  As for the condition for equality, see \cite[Theorem 7.3]{Bracci2}.
\end{proof}

\begin{remark}

Despite the estimate in ${\rm{(iii)}}$, which is due to Schwarz lemma, and the condition for equality in $(\ref{Julia-ineq for Ball})$, Corollary \ref{Schwarz in C^n} is essentially a direct consequence of Rudin's generalization of the classical Julia-Wolff-Carath\'{e}odory theorem (see \cite[Theorem 8.5.6]{Rudin} or \cite[Theorem 2.2.29]{Abate}) under the extra assumption of regularity of $f$ at $p\in\partial \mathbb B^n$.

\end{remark}

\section*{Acknowledgements}

The first author is very grateful to Professor Filippo Bracci for bringing the paper \cite{Bracci4} into his attention, and several valuable communications and comments, from  which he has learnt a lot.  The authors would like to thank the anonymous referee for his/her careful reading of this paper.

\bigskip
\bibliographystyle{amsplain}

\end{document}